\title{On the first two eigenvalues of regular graphs}
\author{Shengtong Zhang}
\address{Department of Mathematics, Stanford University, Stanford, CA 94305, USA}
\email{stzh1555@stanford.edu}
\keywords{Regular graph, Second Eigenvalue, Bollob\'{a}s-Nikiforov Conjecture}
\subjclass{05C50, 15A45}
\begin{document}
\maketitle
\begin{abstract}
    Let $G$ be a regular graph with $m$ edges, and let $\mu_1, \mu_2$ denote the two largest eigenvalues of $A_G$, the adjacency matrix of $G$. We show that, if $G$ is not complete, then
    $$\mu_1^2 + \mu_2^2 \leq \frac{2(\omega - 1)}{\omega} m$$
    where $\omega$ is the clique number of $G$. This confirms a conjecture of Bollob\'{a}s and Nikiforov for regular graphs. We also show that equality holds if and only if $G$ is either a balanced Tur\'{a}n graph or the disjoint union of two balanced Tur\'{a}n graphs of the same size.
\end{abstract}
\section{Introduction}
Throughout the paper, let $G = (V, E)$ be a simple graph with $n(G)$ vertices and $m(G)$ edges. Let $A_G$ denote its adjacency matrix, and let $\mu_1(G), \mu_2(G), \cdots$ denote the largest, second largest, etc. eigenvalue of $A_G$. The well-known Tur\'{a}n's theorem can be concisely stated as  
$$2m(G) \leq \frac{\omega(G) - 1}{\omega(G)} n(G)^2$$
where $\omega(G)$ is the clique number, the size of the largest clique in $G$. 

In 2002, Nikiforov \cite{N02}, strengthening previous work of Wilf \cite{W86}, proved a spectral analog of Tur\'{a}n's theorem. Nikiforov showed that
\begin{equation}
\label{eq:spectral-turan}
\mu_1(G)^2 \leq \frac{2(\omega(G) - 1)}{\omega(G)} m(G).    
\end{equation}
Combined with the familiar bound $\mu_1(G) \geq \frac{2m(G)}{n(G)}$, this result implies Tur\'{a}n's theorem. Many extensions of this inequality are known, such as \cite{N06, BN07}. In fact,  Nikiforov's result spurred a large body of work that studies spectral analogs of the Tur\'{a}n number $\mathsf{ex}(n, F)$. We refer to the fantastic survey by Liu and Ning \cite{LN23} for more details.

In 2007,  Bollob\'{a}s and Nikiforov \cite{BN07} conjectured that something stronger should hold. 
\begin{conjecture}
\label{conj:NB}
Suppose $G$ is a graph with $m(G)$ edges, clique number $\omega$, and is not the complete graph $K_{\omega}$. Then 
$$\mu_1(G)^2 + \mu_2(G)^2 \leq \frac{2(\omega - 1)}{\omega} m(G).$$
\end{conjecture}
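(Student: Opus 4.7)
The key reformulation is the trace identity $\sum_{i=1}^n \mu_i(G)^2 = \operatorname{Tr}(A_G^2) = 2m$, which recasts the conjecture as the equivalent \emph{lower} bound
$$\sum_{i \geq 3} \mu_i(G)^2 \;\geq\; \frac{2m}{\omega}.$$
Writing $P$ for the orthogonal projection onto the top two eigenvectors $v_1, v_2$, this says $\|(I-P)A_G\|_F^2 \geq 2m/\omega$: there must be nontrivial spectral mass outside the top-two eigenspace, in amount proportional to $m/\omega$. My plan is to produce this mass by harvesting the slack present in the Motzkin-Straus inequality $x^T A_G x \leq (1 - 1/\omega)$ for $x \geq 0$ with $\|x\|_1 = 1$, which is the engine behind Nikiforov's bound \eqref{eq:spectral-turan}.

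Operationally, the approach would proceed in three steps. First, apply Motzkin-Straus to the normalized Perron vector $v_1$ (scaled so $\|v_1\|_1 = 1$) to recover $\mu_1^2 \leq 2m(1 - 1/\omega)$, while quantifying the gap between $v_1$ and a uniform distribution on a maximum clique. Second, construct a second test vector $w$ orthogonal to $v_1$, built as a signed combination of indicator vectors of near-cliques detected by $v_1$, such that the Rayleigh quotient $w^T A_G^2 w / \|w\|^2$ bounds $\mu_2^2$ from above by the slack from step one. Third, appeal to Ky Fan's maximum principle: $\mu_1^2 + \mu_2^2 = \max_{Q} \operatorname{Tr}(A_G^2 Q)$ over rank-$2$ orthogonal projections $Q$, and conclude via the pair $(v_1, w/\|w\|)$.

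The main obstacle, and the reason the paper restricts attention to the regular case, is that the conjecture is sharp on intrinsically non-regular examples: the complete bipartite graph $K_{s,t}$ with $s \neq t$ satisfies $\mu_1^2 + \mu_2^2 = st = m = 2m(1-1/\omega)$, and such $K_{s,t}$ is regular only for $s=t$. This forecloses any easy reduction to regular graphs. Indeed, the natural candidates fail: replacing each vertex of $G$ by $t$ independent copies scales both sides of the inequality by $t^2$ and leaves $\omega$ unchanged, so it preserves the ratio without regularizing; and any strict-improvement-by-perturbation argument must break down near the $K_{s,t}$ extremizers. Moreover, in the regular case one has the identity $\mu_1 = 2m/n$ for free, so that $\mu_1^2 \leq 2m(1-1/\omega)$ is immediate from Tur\'{a}n's theorem and the whole content is a bound on $\mu_2^2$; for non-regular $G$ this leverage is lost, and $\mu_1$ and $\mu_2$ must be controlled simultaneously. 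I expect the crux to be a genuine two-vector Motzkin-Straus inequality uniformly covering both the Tur\'{a}n-graph extremizer (regular) and the $K_{s,t}$ extremizer (non-regular), probably executed via a stability/dichotomy separating ``near-regular'' graphs from ``near-complete-multipartite'' ones; if this step resists, the fallback, mirroring the paper, is to first settle the regular case where the Tur\'{a}n-style extremizers live, and leave the non-regular side as a stability problem.
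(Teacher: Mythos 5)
Your proposal does not prove the statement, and it is worth being clear that the paper does not either: the statement is Conjecture~\ref{conj:NB} in full generality, which remains open; the paper proves only the regular case (Theorem~\ref{thm:BN-regular}). Your sketch ends by deferring to ``settle the regular case first,'' but you never execute that step, so nothing is actually established. More importantly, the central analytic step of your plan is directionally wrong. For a test vector $w \perp v_1$, the Rayleigh quotient $w^T A_G^2 w / \|w\|^2$ is a \emph{lower} bound for $\lambda_2(A_G^2) = \max(\mu_2^2, \mu_n^2)$; it cannot bound $\mu_2^2$ from above unless $w$ is an exact eigenvector. Likewise, Ky Fan's principle states $\max_Q \operatorname{Tr}(A_G^2 Q) = \lambda_1(A_G^2) + \lambda_2(A_G^2)$, so evaluating at the particular pair $(v_1, w/\|w\|)$ gives a lower bound on that maximum --- the wrong direction for the conjectured upper bound --- and moreover $\lambda_1(A_G^2)+\lambda_2(A_G^2)$ can strictly exceed $\mu_1^2+\mu_2^2$ (e.g.\ when $|\mu_n| > \mu_2$, as for bipartite graphs). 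To bound $\mu_2$ from above one needs the min--max characterization over a whole codimension-one subspace, or an interlacing argument, not a single Rayleigh quotient.

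The correct ingredients you do identify --- the trace identity recasting the conjecture as $\sum_{i\geq 3}\mu_i^2 \geq 2m/\omega$, the fact that in the regular case $\mu_1 = 2m/n$ reduces everything to a bound on $\mu_2$, and the sharpness of $K_{s,t}$ as a genuinely non-regular equality case --- are all accurate and are good motivation, but none of them produce the required bound $\mu_2 \leq \frac{\omega-1}{\omega}n - d$. The paper's actual mechanism is absent from your sketch: it applies Nikiforov's quadratic-form inequality $x^T K_G x \geq 0$ (with $K_G = \frac{\omega-1}{\omega}J - A_G$) to the two nonnegative vectors $(\sqrt{\mu_1}\,v_1 \pm \sqrt{\mu_2}\,v_2)\circ(\sqrt{\mu_1}\,v_1 \pm \sqrt{\mu_2}\,v_2)$, and plays the resulting inequality against the Ando--Lin Frobenius-norm decomposition (Lemma~\ref{lem:AL}) to isolate the cross term $(v_1\circ v_1)^T K_G(v_2\circ v_2)$, which in the regular case evaluates exactly to $\frac{1}{n}(\frac{\omega-1}{\omega}n - d)$ and yields the bound on $\mu_2$. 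This is the ``two-vector Motzkin--Straus'' step you correctly anticipate must exist, but anticipating it is not supplying it.
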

The complete graph is excluded since it satisfies $\mu_1(K_{\omega}) = \omega - 1$ and $\mu_2(K_{\omega}) = -1$. On the other hand, if $G$ is not complete, then the empty graph $\bar{P_2}$ on two vertices is an induced subgraph of $G$, so Cauchy interlacing implies that $\mu_2(G) \geq \mu_2(\bar{P_2}) = 0$. So we can rewrite this conjecture as
$$\mu_1(G)^2 + \mu_2(G)^2\mathbf{1}_{\mu_2(G) \geq 0} \leq \frac{2(\omega - 1)}{\omega} m(G)$$
for all graphs $G$.

The goal of this paper is to prove this conjecture for regular graphs.
\begin{theorem}
    \label{thm:BN-regular}
    Suppose $G$ is a regular graph with clique number $\omega$, and is not the complete graph $K_{\omega}$. Then 
    $$\mu_1(G)^2 + \mu_2(G)^2 \leq \frac{2(\omega - 1)}{\omega} m(G).$$
    The equality holds if and only if 1) $G$ is a Tur\'{a}n graph $T(n, \omega)$ with $n$ divisible by $\omega$ or 2) $G$ is the disjoint union $T(n / 2, \omega) \sqcup T(n / 2, \omega)$ of two Tur\'{a}n graphs  with $n$ divisible by $2\omega$.
\end{theorem}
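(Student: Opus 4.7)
The plan is to case-split on the connectivity of $G$.

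\textbf{Disconnected case.} If $G$ has $k \ge 2$ connected components $G_1, \ldots, G_k$, each $G_i$ is $d$-regular with $\omega(G_i) \le \omega$. Applying Nikiforov's inequality \eqref{eq:spectral-turan} to each component yields
$$d^2 \le \frac{2(\omega(G_i)-1)}{\omega(G_i)} m(G_i) \le \frac{2(\omega-1)}{\omega} m(G_i),$$
using that $t \mapsto 1 - 1/t$ is increasing. Since $G$ is disconnected, $\mu_1 = \mu_2 = d$, so $\mu_1^2 + \mu_2^2 = 2d^2 \le kd^2 \le \frac{2(\omega-1)}{\omega}m$. Equality forces $k = 2$, $\omega(G_i) = \omega$, and tightness in Nikiforov on each component, hence each $G_i = T(n/2, \omega)$.

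\textbf{Connected case.} Now $\mu_1 = d$ is simple, so it suffices to show $\mu_2^2 \le \frac{d((\omega-1)n - \omega d)}{\omega}$. Let $v$ be a unit $\mu_2$-eigenvector; then $v \perp \mathbf{1}$ and $\mu_2 \ge 0$ by the interlacing noted in the introduction. The naive Nikiforov-style argument---Cauchy-Schwarz on edges, then Motzkin-Straus on the PDF $v^2$---gives only $\mu_2^2 \le \frac{2(\omega-1)m}{\omega}$, missing the target by exactly $d^2$. A natural strategy for recovering this deficit is to partition $V = V^+ \sqcup V^0 \sqcup V^-$ by the sign of $v$, decompose $v = v^+ - v^-$ with $v^\pm \ge 0$ supported on $V^\pm$, and split
$$\mu_2 = (v^+)^T A v^+ + (v^-)^T A v^- - 2(v^+)^T A v^-.$$
Nikiforov applied to $G[V^+]$ and $G[V^-]$ bounds the first two diagonal terms, while testing the Rayleigh quotient against the step function $|V^-|\mathbf{1}_{V^+} - |V^+|\mathbf{1}_{V^-}$ (orthogonal to $\mathbf{1}$) yields an expander-mixing-lemma-type lower bound on $e(V^+, V^-)$, supplying the missing $d^2$.

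\textbf{Main obstacle.} The crux is to combine the Nikiforov bounds on the diagonal blocks with a Cauchy-Schwarz bound on the bipartite cross term $-2(v^+)^T A v^-$, together with the expander-type estimate on $e(V^+, V^-)$, into a unified inequality of the precise form $\frac{2(\omega-1)m}{\omega} - d^2$; the presence of $V^0$ requires absorbing correction terms. The equality analysis should then force $|V^+| = |V^-|$ and $v$ to be a step function on the parts of a Turán graph, which combined with $d$-regularity and $\omega(G) = \omega$ identifies $G$ as $T(n, \omega)$ with $\omega$ dividing $n$.
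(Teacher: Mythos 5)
Your disconnected case is correct and self-contained: for a $d$-regular graph with $k\ge 2$ components, $\mu_1=\mu_2=d$ and summing Nikiforov's inequality \eqref{eq:spectral-turan} over components gives $2d^2\le kd^2\le \frac{2(\omega-1)}{\omega}m$, with equality forcing $k=2$ and $d=\frac{\omega-1}{\omega}\cdot\frac{n}{2}$, whence each component is $T(n/2,\omega)$ by Tur\'{a}n's theorem. This is a clean alternative to how the paper reaches that equality configuration (the paper treats it inside a unified argument via the case $\mu_2=\mu_1$).

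The connected case, however, is the heart of the theorem and your proposal does not prove it; it only names a strategy and then, in your own ``Main obstacle'' paragraph, concedes that you do not know how to combine the pieces. Concretely: the sign decomposition $v=v^+-v^-$ with Theorem~\ref{thm:Niki} applied to $G[V^+]$ and $G[V^-]$ gives $(v^{\pm})^TAv^{\pm}\le \frac{\omega-1}{\omega}\|v^{\pm}\|_1^2$, and since $v\perp\mathbf{1}$ one gets $\|v^+\|_1=\|v^-\|_1$ and hence (after Cauchy--Schwarz) something like $\mu_2\le\frac{\omega-1}{2\omega}n$; combined with $\mu_2\le d$ this yields $\mu_2^2\le \frac{(\omega-1)nd}{2\omega}$, which is \emph{weaker} than the target $\mu_2^2\le\frac{(\omega-1)nd}{\omega}-d^2$ whenever $d>\frac{(\omega-1)n}{2\omega}$ --- in particular it fails for the Tur\'{a}n graph itself, which is the extremal example. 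So the deficit of $d^2$ is not ``supplied'' by the ingredients you list, and the expander-mixing estimate on $e(V^+,V^-)$ is not worked out. The paper closes exactly this gap by a different mechanism: it applies Theorem~\ref{thm:Niki} to the two nonnegative vectors $(\sqrt{\mu_1}\,\bv_1\pm\sqrt{\mu_2}\,\bv_2)\circ(\sqrt{\mu_1}\,\bv_1\pm\sqrt{\mu_2}\,\bv_2)$, subtracts the Ando--Lin inequality of Lemma~\ref{lem:AL} (applied with $r=\omega$), and uses $\bv_1=\frac{1}{\sqrt n}\bone$ to reduce everything to the single scalar inequality $\mu_2<\frac{\omega-1}{\omega}n-d$, which together with $\mu_2\le d$ gives precisely $\mu_2^2<\frac{(\omega-1)}{\omega}\cdot 2m-d^2$. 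Neither the choice of test vectors nor the interplay with Lemma~\ref{lem:AL} appears in your proposal, so as written the proof of the main inequality (and consequently the equality analysis in the connected case) is missing.
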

Before we dive into the proof, we summarize past works on this conjecture. A significant breakthrough occured in 2015, as Ando and Lin \cite{AL15} proved this conjecture with $\omega(G)$ replaced by $\chi(G)$, the chromatic number of $G$. In fact, they showedthe stronger result
$$\sum_{i: \mu_i(G) \geq 0} \mu_i(G)^2 \leq \frac{2(\chi(G) - 1)}{\chi(G)} m(G).$$
 for any graph $G$. As a corollary, the conjecture of Bollob\'{a}s and Nikiforov holds for all weakly perfect graphs $G$. However, for general graphs $G$, it is a classical result of Erd\H{o}s that $\chi(G)$ is not bounded by any function of $\omega(G)$.

Recently, Guo and Spiro \cite{GS22} replaced $\chi(G)$ with the fractional chromatic number $\chi_f(G)$. More recently, Coutinho and Spier \cite{CS23} obtained a further improvement, replacing $\chi_f(G)$ with the vector chromatic number $\chi_{vec}(G)$ of $G$. This is still insufficient to prove \cref{conj:NB}. For example, when $G = C_5$, we have $\chi_{vec}(G) = \sqrt{5}$ while $\omega(G) = 2$.

Another breakthrough occured in 2021, as Lin, Ning and Wu \cite{LNW21} proved the conjecture of Bollob\'{a}s and Nikiforov when $\omega = 2$. Their method exploits the fact that the number of $K_3$ in $G$ is equal to
$$\frac{1}{6} \sum_{i = 1}^{n(G)} \mu_i(G)^3.$$
However, no such identity is available for the number of $K_4$. As far as I know, the conjecture of Bollob\'{a}s and Nikiforov is still open when $\omega = 3$.

Let's now look at the proof to \cref{thm:BN-regular}. For convenience, we make all dependence on $G$ implicit, e.g. abbreviate all $m(G)$ with $m$. We also fix a few linear algebraic notations.

\begin{itemize}
    \item Let $\bv_i$ be any normalized real eigenvector corresponding to $\mu_i$ in $A_G$.
    \item Let $\bone$ denote the all one vector, and let $J$ denote the all one matrix.
    \item For a vector $\bv \in \CC^V$ and a vertex $i \in V$, let $\bv(i)$ denote the value of $\bv$ at index $i$.
    \item For vectors $\bx$ and $\by$ in $\CC^V$, we denote by $\bx \circ \by$ their elementwise product, with $\bx \circ \by(i) = \bx(i) \by(i)$ for each $i \in V$.
\end{itemize}
\section{Recap of Previous Work}
Our proof combines the technique of Nikiforov \cite{N02} and Ando-Lin \cite{AL15}.

Nikiforov's main tool is the celebrated Motzkin-Straus inequality \cite{MS65}. We consider the matrix 
$$K_G = \frac{\omega - 1}{\omega}J - A_G.$$ 
The Motzkin-Straus inequality states that
\begin{theorem}
    \label{thm:Niki}
    Let $\bx \in \RR_{\geq 0}^{V}$ be any vector with non-negative entries. Then
    $$\bx^T A_G \bx \leq \frac{\omega - 1}{\omega} (\mathbf{1}^T \bx)^2.$$
    In other words
    $$\bx^T K_G \bx \geq 0.$$  
\end{theorem}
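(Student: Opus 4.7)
The inequality is homogeneous of degree $2$ in $\bx$, so after rescaling we may assume $\mathbf{1}^T \bx = 1$ and work on the simplex $\Delta = \{\bx \in \RR_{\geq 0}^V : \sum_i \bx(i) = 1\}$. The claim then reduces to the Motzkin--Straus inequality $\bx^T A_G \bx \leq 1 - 1/\omega$, which I would prove by a standard mass-shifting argument.

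Since $\Delta$ is compact and the quadratic form $\bx \mapsto \bx^T A_G \bx$ is continuous, it attains a maximum on $\Delta$ at some point $\bx^\ast$. Let $S = \mathrm{supp}(\bx^\ast)$. The main step is to show that we may assume $S$ induces a clique in $G$. Suppose, toward a contradiction, that there exist distinct $i, j \in S$ with $ij \notin E$. Let
\[
a = \sum_{k \sim i,\, k \neq j} \bx^\ast(k), \qquad b = \sum_{k \sim j,\, k \neq i} \bx^\ast(k),
\]
so that the partial derivatives of $\bx^T A_G \bx$ in the $i$ and $j$ coordinates at $\bx^\ast$ are $2a$ and $2b$ respectively (the $ij$ cross term is absent because $ij \notin E$). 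Consider perturbations that move mass between positions $i$ and $j$ while keeping $\bx^\ast(i) + \bx^\ast(j)$ fixed, staying in $\Delta$. The objective changes linearly in this one-parameter family, so we may shift all of the mass from the smaller-gradient coordinate to the larger one (say $a \geq b$, in which case we push $\bx^\ast(j) \to 0$) without decreasing $\bx^T A_G \bx$. This yields another maximizer with strictly smaller support. Iterating, we obtain a maximizer whose support $C$ is a clique, hence $|C| \leq \omega$.

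On such a clique, $\bx^T A_G \bx = \bigl(\mathbf{1}^T \bx\bigr)^2 - \sum_{i \in C} \bx(i)^2 = 1 - \sum_{i \in C} \bx(i)^2$, and by Cauchy--Schwarz (or power-mean) applied to the $|C|$ positive entries summing to $1$,
\[
\sum_{i \in C} \bx(i)^2 \geq \frac{1}{|C|} \geq \frac{1}{\omega},
\]
which gives the desired bound $\bx^T A_G \bx \leq 1 - 1/\omega = (\omega-1)/\omega$. Rescaling recovers the general form with $(\mathbf{1}^T \bx)^2$ on the right.

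The only subtle point is the mass-shifting step: one must verify that the procedure terminates (it does, because each shift strictly reduces $|S|$) and that the shifts stay inside $\Delta$ (they do, since we only transfer mass between two non-negative coordinates whose sum is preserved). Everything else is a one-line calculation on a clique. I expect the proof to be two short paragraphs.
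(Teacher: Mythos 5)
Your proposal is correct. Note that the paper itself gives no proof of this theorem: it is quoted as Nikiforov's observation from \cite{N02}, and it is precisely the Motzkin--Straus theorem (in the normalization $\bone^T \bx = 1$, it says $\bx^T A_G \bx \leq 1 - 1/\omega$ on the simplex). What you have written is the classical mass-shifting proof of Motzkin--Straus, and it is sound: the key point that the objective is affine in the transfer parameter holds because both diagonal entries and the $ij$ entry of $A_G$ vanish when $ij \notin E$, so the quadratic term of the perturbation drops out; the iteration terminates since the support size strictly decreases at each step; and the final computation on a clique, $\bx^T A_G \bx = 1 - \sum_{i \in C} \bx(i)^2 \leq 1 - 1/|C| \leq 1 - 1/\omega$, is exactly right. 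So you have supplied a complete, self-contained proof of a statement the paper delegates to a citation.
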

We briefly recap how \eqref{eq:spectral-turan} is derived from this theorem. We take $\bx = \bv_1 \circ \bv_1$, and apply Cauchy-Schwarz on the left hand side to get
$$\bx^T A_G \bx \geq \frac{1}{2m} \left(\bv_1^T A_G \bv_1\right)^2 = \frac{\mu_1^2}{2m}.$$
The spectral Tur\'{a}n theorem now follows by rearranging.

Ando-Lin's idea is a matrix decomposition trick. Here, we present a modification of their argument specifically tailored for \cref{conj:NB}. 
\begin{lemma}
\label{lem:AL}
Let $G$ be any graph. Suppose $G$ satisfies the following inequality for some $r > 1$
\begin{equation}
\label{eq:AL-1}
\sum_{ij \in E} (\mu_1 \bv_1(i) \bv_1(j) + \mu_2 \bv_2(i) \bv_2(j))^2 \leq \frac{r - 1}{r} \sum_{i, j \in V} (\mu_1 \bv_1(i) \bv_1(j) + \mu_2 \bv_2(i) \bv_2(j))^2    
\end{equation}
or equivalently, for $K_G = \frac{r - 1}{r} J -A_G$, we have
\begin{equation}
\label{eq:AL-2}
\mu_1^2 (\bv_1 \circ \bv_1)^T K_G (\bv_1 \circ \bv_1) + \mu_2^2 (\bv_2 \circ \bv_2)^T K_G (\bv_2 \circ \bv_2) + 2 \mu_1\mu_2 (\bv_1 \circ \bv_2)^T K_G (\bv_1 \circ \bv_2) \geq 0.
\end{equation}
Then $G$ satisfies 
$$\mu_1^2 + \mu_2^2 \leq \frac{r - 1}{r} \cdot 2m.$$
\end{lemma}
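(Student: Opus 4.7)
My approach is to extend Nikiforov's Cauchy--Schwarz derivation of \eqref{eq:spectral-turan} from \cref{thm:Niki} to the two-eigenvector setting. Nikiforov took the test vector $\bv_1\circ\bv_1$ and combined $\bv_1^T A_G\bv_1=\mu_1$ with Cauchy--Schwarz; the analogous move applied to the ``rank-two'' combination $\mu_1\bv_1\bv_1^T+\mu_2\bv_2\bv_2^T$ will produce exactly the three quadratic forms in $\bv_1\circ\bv_1$, $\bv_2\circ\bv_2$, $\bv_1\circ\bv_2$ that appear in \eqref{eq:AL-2}. I will work directly with the form \eqref{eq:AL-2}; its equivalence with \eqref{eq:AL-1} follows by expanding the square and using $K_G=\tfrac{r-1}{r}J-A_G$.

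First I record the identity
$$\sum_{i,j:\,ij\in E}\bigl(\mu_1\bv_1(i)\bv_1(j)+\mu_2\bv_2(i)\bv_2(j)\bigr)=\mu_1\bv_1^T A_G\bv_1+\mu_2\bv_2^T A_G\bv_2=\mu_1^2+\mu_2^2,$$
where the sum ranges over the $2m$ ordered pairs $(i,j)$ with $A_G(i,j)=1$. Cauchy--Schwarz then yields
$$(\mu_1^2+\mu_2^2)^2\;\leq\;2m\sum_{i,j:\,ij\in E}\bigl(\mu_1\bv_1(i)\bv_1(j)+\mu_2\bv_2(i)\bv_2(j)\bigr)^2.$$
Expanding the square on the right, the inner sum is exactly $\mu_1^2(\bv_1\circ\bv_1)^T A_G(\bv_1\circ\bv_1)+\mu_2^2(\bv_2\circ\bv_2)^T A_G(\bv_2\circ\bv_2)+2\mu_1\mu_2(\bv_1\circ\bv_2)^T A_G(\bv_1\circ\bv_2)$.

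On the other hand, using $K_G=\tfrac{r-1}{r}J-A_G$ together with $\bone^T(\bv_k\circ\bv_k)=\|\bv_k\|^2=1$ and $\bone^T(\bv_1\circ\bv_2)=\bv_1^T\bv_2=0$ (we may take $\bv_1,\bv_2$ orthonormal even when $\mu_1=\mu_2$), the hypothesis \eqref{eq:AL-2} rewrites as the assertion that this same combination is at most $\tfrac{r-1}{r}(\mu_1^2+\mu_2^2)$. Feeding this into the Cauchy--Schwarz estimate gives $(\mu_1^2+\mu_2^2)^2\leq 2m\cdot\tfrac{r-1}{r}(\mu_1^2+\mu_2^2)$, and dividing by $\mu_1^2+\mu_2^2$ (the case $\mu_1=\mu_2=0$ corresponds to an edgeless $G$ and is trivial) yields the claimed bound.

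I do not expect a serious obstacle here: the lemma is essentially a bookkeeping statement that packages Nikiforov's Cauchy--Schwarz trick so it accommodates the top two eigenvectors at once. The substantive work of the paper will lie in verifying the hypothesis \eqref{eq:AL-2} for regular graphs with $r=\omega$, presumably by combining \cref{thm:Niki} with an Ando--Lin style matrix decomposition.
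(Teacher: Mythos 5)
Your proof is correct, but it takes a genuinely different route from the paper. The paper follows Ando--Lin: it sets $X = \mu_1 \bv_1\bv_1^T + \mu_2\bv_2\bv_2^T$ and $Y = A_G - X$, translates \eqref{eq:AL-1} into $\sum_{ij \in E} X_{ij}^2 \leq (r-1)\sum_{ij \notin E} X_{ij}^2$, and then uses $Y_{ij} = -X_{ij}$ off the edge set together with $\langle X, Y\rangle = 0$ and a Cauchy--Schwarz on $\sum_{ij \in E} X_{ij}Y_{ij}$ to deduce $\sum_{i,j} Y_{ij}^2 \geq \frac{1}{r-1}\sum_{i,j}X_{ij}^2$, finishing with $\sum_i \mu_i^2 = 2m$. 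You instead extend Nikiforov's original Cauchy--Schwarz argument directly: writing $X_{ij} = \mu_1\bv_1(i)\bv_1(j)+\mu_2\bv_2(i)\bv_2(j)$, you use $\sum_{ij\in E} X_{ij} = \mu_1\bv_1^TA_G\bv_1 + \mu_2\bv_2^TA_G\bv_2 = \mu_1^2+\mu_2^2$ over the $2m$ ordered edge-pairs, apply Cauchy--Schwarz to get $(\mu_1^2+\mu_2^2)^2 \leq 2m\sum_{ij\in E}X_{ij}^2$, and observe that \eqref{eq:AL-1} (together with $\bone^T(\bv_k\circ\bv_k)=1$ and $\bone^T(\bv_1\circ\bv_2)=0$, which give $\sum_{i,j\in V}X_{ij}^2 = \mu_1^2+\mu_2^2$) bounds the edge sum by $\frac{r-1}{r}(\mu_1^2+\mu_2^2)$. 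All steps check out, including your handling of the degenerate case $\mu_1^2+\mu_2^2=0$ and your remark that $\bv_1,\bv_2$ can be taken orthonormal even when $\mu_1=\mu_2$. Your argument is shorter and avoids introducing the complementary matrix $Y$ entirely; the paper's decomposition is the one that generalizes to Ando--Lin's stronger statement $\sum_{\mu_i\geq 0}\mu_i^2 \leq \frac{2(\chi-1)}{\chi}m$ involving all nonnegative eigenvalues, but for this specific two-eigenvalue lemma the two routes deliver exactly the same conclusion.
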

\begin{proof}
First, let's justify why \eqref{eq:AL-1} and \eqref{eq:AL-2} are equivalent. Expanding the left hand side of \eqref{eq:AL-1}, we have
\begin{align*}
    &\sum_{ij \in E} (\mu_1 \bv_1(i) \bv_1(j) + \mu_2 \bv_2(i) \bv_2(j))^2 \\
    =& \sum_{ij \in E} \mu_1^2 \bv_1(i)^2 \bv_1(j)^2 + 2\mu_1\mu_2 \bv_1(i) \bv_1(j) \bv_2(i) \bv_2(j) + \mu_2^2 \bv_2(i)^2 \bv_2(j)^2 \\
    =& \mu_1^2 \sum_{ij \in E} (\bv_1 \circ \bv_1)(i) (\bv_1 \circ \bv_1)(j) + 2\mu_1\mu_2 \sum_{ij \in E} (\bv_1 \circ \bv_2)(i) (\bv_1 \circ \bv_2)(j) + \mu_2^2 \sum_{ij \in E} (\bv_2 \circ \bv_2)(i) (\bv_2 \circ \bv_2)(j) \\
    =& \mu_1^2 (\bv_1 \circ \bv_1)^T A_G (\bv_1 \circ \bv_1) + 2\mu_1\mu_2 (\bv_1 \circ \bv_2)^T A_G (\bv_1 \circ \bv_2) + 
    \mu_2^2 (\bv_2 \circ \bv_2)^T A_G (\bv_2 \circ \bv_2).
\end{align*}
Similarly, the right hand side of \eqref{eq:AL-1} is equal to
$$ \frac{r - 1}{r} \left(\mu_1^2 (\bv_1 \circ \bv_1)^T J (\bv_1 \circ \bv_1) + 2\mu_1\mu_2 (\bv_1 \circ \bv_2)^T J (\bv_1 \circ \bv_2) + 
\mu_2^2 (\bv_2 \circ \bv_2)^T J (\bv_2 \circ \bv_2)\right).$$
Thus, the difference between the RHS and the LHS of the \eqref{eq:AL-1} is precisely the LHS of \eqref{eq:AL-2}.

Now, we use Ando-Lin's method to prove that \eqref{eq:AL-1} implies the result of the lemma. It is a familiar fact that we can write $A_G$ has
$$A_G = \sum_{i = 1}^{n} \mu_i \bv_i \bv_i^T.$$
Following Ando and Lin, we write
$$X = \mu_1 \bv_1 \bv_1^T + \mu_2 \bv_2 \bv_2^T, Y = A_G - X = \sum_{i \geq 3} \mu_i \bv_i \bv_i^T.$$
Then \cref{eq:AL-1} becomes
$$\sum_{ij \in E} X_{ij}^2 \leq \frac{r - 1}{r}\sum_{i, j \in V} X_{ij}^2$$
or
$$\sum_{ij \in E} X_{ij}^2 \leq (r - 1)\sum_{ij \notin E} X_{ij}^2$$
As $X + Y = A_G$, we have $Y_{ij} = -X_{ij}$ whenever $ij \notin E$. So
$$\sum_{ij \notin E} Y^2_{ij} = \sum_{ij \notin E} X^2_{ij} \geq \frac{1}{r - 1} \sum_{ij \in E} X^2_{ij}.$$
Furthermore, the inner product $\langle X, Y \rangle$ is equal to $0$, which implies
$$\sum_{ij \in E} X_{ij} Y_{ij} = - \sum_{ij \notin E} X_{ij} Y_{ij} = \sum_{ij \notin E} X_{ij}^2.$$
Via Cauchy-Schwarz inequality, we get
$$\sum_{ij \in E} Y^2_{ij} \geq \frac{(\sum_{ij \notin E} X_{ij}^2)^2}{\sum_{ij \in E} X_{ij}^2} \geq \frac{1}{r - 1} \sum_{ij \notin E} X_{ij}^2.$$
Adding the two inequalities, we conclude that
$$\sum_{i, j \in V} Y^2_{ij} \geq \frac{1}{r - 1} \sum_{i,j \in V} X_{ij}^2.$$
Finally, note that $\sum_{ij} X_{ij}^2 = \langle X, X \rangle = \mu_1^2 + \mu_2^2$, and similarly $\sum_{ij} Y_{ij}^2 = \sum_{i \geq 3} \mu_i^2$. Thus we get
$$\mu_1^2 + \mu_2^2 \leq (r - 1)\sum_{i \geq 3} \mu_i^2.$$
As $\mu_1^2 + \mu_2^2 + \sum_{i \geq 3} \mu_i^2 = 2m$, we conclude that
$$\mu_1^2 + \mu_2^2 \leq \frac{2(r - 1)}{r} m$$
as desired.
\end{proof}
\section{Proof of Conjecture 1.1 for regular graphs}
Suppose that $G$ has clique number $\omega$, and is $d$-regular on $n \geq \omega + 1$ vertices. Then $\mu_1 = d$ and $\mu_2 \geq 0$. In this case, we can take $\bv_1 = \frac{1}{\sqrt{n}} \bone$.

Suppose for the sake of contradiction that $G$ does not satisfy \cref{conj:NB}. We apply \cref{thm:Niki} on two vectors
$$\bx_+ = (\sqrt{\mu_1} \bv_1 + \sqrt{\mu_2} \bv_2) \circ (\sqrt{\mu_1} \bv_1 + \sqrt{\mu_2} \bv_2), \bx_- = (\sqrt{\mu_1} \bv_1 - \sqrt{\mu_2} \bv_2) \circ (\sqrt{\mu_1} \bv_1 - \sqrt{\mu_2} \bv_2).$$
Clearly these two vectors are non-negative, so we get
$$\bx_+^T K_G \bx_+ + \bx_-^T K_G \bx_- \geq 0.$$
We can expand this expression by observing that
$$\bx_{\pm} = \mu_1 \bv_1 \circ \bv_1 + \mu_2 \bv_2 \circ \bv_2 \pm 2 \sqrt{\mu_1\mu_2} \bv_1 \circ \bv_2.$$
So we get
\begin{align*}
    0 \leq& \frac{1}{2}(\bx_+^T K_G \bx_+ + \bx_-^T K_G \bx_-) \\
    =& \mu_1^2 (\bv_1 \circ \bv_1)^T K_G (\bv_1 \circ \bv_1) + \mu_2^2 (\bv_2 \circ \bv_2)^T K_G (\bv_2 \circ \bv_2) + 2 \mu_1\mu_2 (\bv_1 \circ \bv_1)^T K_G (\bv_2 \circ \bv_2) \\
    &+ 4 \mu_1\mu_2 (\bv_1 \circ \bv_2)^T K_G (\bv_1 \circ \bv_2).
\end{align*}
For the sake of contradiction, we assume that $G$ does not satisfy \cref{conj:NB}. Then \cref{lem:AL} with $r = \omega$ tells us that
$$\mu_1^2 (\bv_1 \circ \bv_1)^T K_G (\bv_1 \circ \bv_1) + \mu_2^2 (\bv_2 \circ \bv_2)^T K_G (\bv_2 \circ \bv_2) + 2 \mu_1\mu_2 (\bv_1 \circ \bv_2)^T K_G (\bv_1 \circ \bv_2) < 0.$$
Comparing the two inequalities, we must have
$$2 \mu_1 \mu_2 (\bv_1 \circ \bv_2)^T K_G (\bv_1 \circ \bv_2) + 2\mu_1\mu_2 (\bv_1 \circ \bv_1)^T K_G (\bv_2 \circ \bv_2) > 0$$
or
$$(\bv_1 \circ \bv_2)^T K_G (\bv_1 \circ \bv_2) + (\bv_1 \circ \bv_1)^T K_G (\bv_2 \circ \bv_2) > 0.$$
The discussion above applies to a general graph $G$. We now utilize the specific condition that $\bv_1 = \frac{1}{\sqrt{n}} \bone$ and $K_G = \frac{\omega - 1}{\omega} J - A_G$. Also note that $\langle \bv_2,\bone \rangle = 0$, so $\bv_2^T J \bv_2 = 0$. We compute that
$$(\bv_1 \circ \bv_2)^T K_G (\bv_1 \circ \bv_2) = \frac{1}{n} \bv_2^T K_G \bv_2 = -\frac{1}{n} \mu_2$$
and
\begin{align*}
(\bv_1 \circ \bv_1)^T K_G (\bv_2 \circ \bv_2) &= \frac{1}{n} \bone^T K_G (\bv_2 \circ \bv_2)  \\
&= \frac{1}{n} \left(\frac{\omega - 1}{\omega} n - d\right) \bone^T (\bv_2 \circ \bv_2) \\
&= \frac{1}{n} \left(\frac{\omega - 1}{\omega} n - d\right) \norm{\bv_2}^2 = \frac{1}{n} \left(\frac{\omega - 1}{\omega} n - d\right).
\end{align*}
Thus we conclude that
$$\mu_2 < \frac{\omega - 1}{\omega} n - d.$$
Combining with the fact that $\mu_2 \leq \mu_1 = d$, we conclude that
$$\mu_2^2 < \frac{\omega - 1}{\omega} n d - d^2 = \frac{\omega - 1}{\omega} \cdot 2m - \mu_1^2$$
so $G$ must satisfy \cref{conj:NB}, contradiction.

\section{The case of equality}
In this section, we determine all regular graphs $G \neq K_{\omega}$ for which the equality
$$\mu_1^2 + \mu_2^2 = \frac{2(\omega - 1)}{\omega}m$$
holds. 

For such graphs $G$, we must have
$$\mu_1^2 (\bv_1 \circ \bv_1)^T K_G (\bv_1 \circ \bv_1) + \mu_2^2 (\bv_2 \circ \bv_2)^T K_G (\bv_2 \circ \bv_2) + 2 \mu_1\mu_2 (\bv_1 \circ \bv_2)^T K_G (\bv_1 \circ \bv_2) \leq 0.$$
Indeed, if the left-hand side is greater than $0$, then the assumption in \cref{lem:AL} holds for some $r < \omega$, so we have $\mu_1^2 + \mu_2^2 < \frac{2(\omega - 1)}{\omega}m$.

Following the analysis in the previous section, we must have
$$0 \leq \mu_2 \leq \frac{\omega - 1}{\omega} n - d.$$
Furthermore, we have $\mu_2 \leq d$. On the other hand, we have $\mu_2^2 = (\frac{\omega - 1}{\omega} n - d)d$. So one of the two cases below must hold.

\textbf{Case 1}: $\frac{\omega - 1}{\omega} n - d = 0$. In this case, $G$ is a $K_{\omega + 1}$-free graph on $n$ vertices with $\frac{\omega -  1}{2\omega} n^2$ edges, so $G$ must be a Tur\'{a}n graph $T(n, \omega)$ with $n$ divisible by $\omega$.

\textbf{Case 2}: $\mu_2 = \frac{\omega - 1}{\omega} n - d = d$. As $\mu_2 = \mu_1$, $G$ must have two connected components $G_1, G_2$, both regular with degree $d = \frac{\omega - 1}{2\omega} n$. By Tur\'{a}n's theorem applied to each $G_i$, this is possible if and only if $G_1$ and $G_2$ are Tur\'{a}n graph $T(n / 2,\omega)$ with $n$ divisible by $2\omega$.

\section{Difficulties Ahead}
To finish our discussion, we consider some difficulties in applying the methods in this paper to the non-regular case of \cref{conj:NB}.

A prerequisite for an inequality-based approach to succeed is that, for any $G$ such that \cref{conj:NB} is tight, any inequality involved should likely be tight. Unfortunately, the key inequality in our argument
\begin{align*}
0 \leq& \mu_1^2 (\bv_1 \circ \bv_1)^T K_G (\bv_1 \circ \bv_1) + \mu_2^2 (\bv_2 \circ \bv_2)^T K_G (\bv_2 \circ \bv_2) + 2 \mu_1\mu_2 (\bv_1 \circ \bv_1)^T K_G (\bv_2 \circ \bv_2) \\
    &+ 4 \mu_1\mu_2 (\bv_1 \circ \bv_2)^T K_G (\bv_1 \circ \bv_2)    
\end{align*}
is not tight for many tight examples of \cref{conj:NB}. One such examples is $G = T(n_1, \omega) \sqcup T(n_2, \omega)$, where $n_1 > n_2$ are both divisible by $\omega$. Another example is $G = P_5$ as discovered by Lin, Ning and Wu \cite{LNW21}. So additional ideas might be needed to completely resolve Bollob\'{a}s and Nikiforov's conjecture.

\section*{Acknowledgements}
The author thanks the anonymous referee, Dr. Jonathan Tidor, Prof. Hao Huang, Prof. Bo Ning, Dr. Clive Elphick, Prof. Huiqiu Lin and Prof. Vladimir Nikiforov for many valuable suggestions on this paper. The author is supported by the Craig Franklin Fellowship in Mathematics at Stanford University.

\bibliographystyle{plain}

\end{document}